\documentclass[11pt,onecolumn]{article}

\usepackage[a4paper,margin=1.15in]{geometry}
\usepackage[T1]{fontenc}
\usepackage{lmodern}
\usepackage{microtype}
\usepackage[dvipsnames]{xcolor}
\usepackage{amsmath}
\usepackage{amssymb}
\usepackage{amsthm}
\usepackage{aliascnt}
\usepackage{mathtools}
\usepackage{enumitem}
\usepackage{titlesec}
\usepackage{fancyhdr}
\usepackage{hyperref}
\usepackage{cleveref}

\definecolor{linkblue}{HTML}{1F4E79}
\hypersetup{
    colorlinks=true,
    linkcolor=linkblue,
    citecolor=linkblue,
    urlcolor=linkblue,
    pdfborder={0 0 0}
}

\titleformat{\section}
    {\large\bfseries\color{linkblue}}{\thesection}{0.75em}{}
\titleformat{\subsection}
    {\normalsize\bfseries\color{linkblue}}{\thesubsection}{0.75em}{}
\titlespacing*{\section}{0pt}{2.3ex plus 0.8ex minus 0.2ex}{1.1ex plus 0.2ex}
\titlespacing*{\subsection}{0pt}{1.8ex plus 0.6ex minus 0.2ex}{0.8ex plus 0.2ex}

\setlist[itemize]{leftmargin=2.2em,itemsep=0.25ex,topsep=0.6ex}
\setlist[enumerate]{leftmargin=2.2em,itemsep=0.25ex,topsep=0.6ex}
\renewcommand{\headrulewidth}{0.4pt}
\renewcommand{\headrule}{\hbox to\headwidth{\color{linkblue}\leaders\hrule height \headrulewidth\hfill}}
\fancypagestyle{plain}{%
    \fancyhf{}
    \fancyfoot[C]{\small\color{linkblue}\thepage}
    \renewcommand{\headrulewidth}{0pt}
}

\newtheoremstyle{blueplain}
    {0.9ex}{0.9ex}{\itshape}{}%
    {\bfseries\color{linkblue}}{.}{0.55em}{}
\theoremstyle{blueplain}
\newtheorem{theorem}{Theorem}[section]

\newaliascnt{corollary}{theorem}
\newtheorem{corollary}[corollary]{Corollary}
\aliascntresetthe{corollary}

\newaliascnt{proposition}{theorem}
\newtheorem{proposition}[proposition]{Proposition}
\aliascntresetthe{proposition}

\newaliascnt{lemma}{theorem}

\aliascntresetthe{lemma}

\newtheoremstyle{bluedefinition}
    {0.9ex}{0.9ex}{\normalfont}{}%
    {\bfseries\color{linkblue}}{.}{0.55em}{}
\theoremstyle{bluedefinition}
\newaliascnt{remark}{theorem}

\aliascntresetthe{remark}

\crefname{theorem}{theorem}{theorems}
\Crefname{theorem}{Theorem}{Theorems}
\crefname{corollary}{corollary}{corollaries}
\Crefname{corollary}{Corollary}{Corollaries}
\crefname{proposition}{proposition}{propositions}
\Crefname{proposition}{Proposition}{Propositions}
\crefname{lemma}{lemma}{lemmas}
\Crefname{lemma}{Lemma}{Lemmas}
\crefname{remark}{remark}{remarks}
\Crefname{remark}{Remark}{Remarks}

\title{Subhomogeneity and Arveson Boundary of Free Polyhedra}
\author{\textsc{Tim Netzer}\\[0.25em]
{\small Department of Mathematics, University of Innsbruck}}

\date{\today}

\makeatletter
\renewcommand{\maketitle}{%
    \thispagestyle{plain}%
    \begin{center}
        \vspace*{-2.6em}
        {\LARGE\bfseries\color{linkblue}\@title\par}
        \vspace{1.25em}
        {\large\@author\par}
        \vspace{0.7em}
        {\small\color{gray}\@date\par}
    \end{center}
    \vspace{1.1em}
}
\makeatother

\renewenvironment{abstract}{%
    \begin{center}
    \begin{minipage}{0.9\textwidth}
    \small
    \begin{center}
        {\bfseries\color{linkblue}\abstractname}
    \end{center}
    \vspace{-0.4em}
}{%
    \end{minipage}
    \end{center}
    \vspace{0.8em}
}

\begin{document}
\maketitle

\begin{abstract}
We study subhomogeneity of the minimal operator system over a polyhedral
cone, or equivalently the size of irreducible Arveson boundary points of free
polyhedra. We obtain a complete classification in dimension three for cones
(equivalently, dimension two for polytopes): cones with three extreme rays
are \(1\)-subhomogeneous, cones with four extreme rays are
\(2\)-subhomogeneous, and cones with at least five extreme rays are not
subhomogeneous. In the last case, we construct irreducible Arveson boundary
points at every even matrix level. We prove that subhomogeneity passes to
faces and face quotients. We also determine the size of irreducible Arveson boundary points of free polyhedra over
products of two simplices, obtaining a dichotomy between the product of two
segments and all remaining cases. As applications, we classify
\(n\)-dimensional cones with \(n+1\) extreme rays. More generally, when every
facet omits at most two extreme rays, we show that subhomogeneity occurs
exactly for direct sums of simplicial cones and three-dimensional four-ray
cones. We also construct, for a range of fixed dimensions and ray counts,
cones exhibiting both subhomogeneous and non-subhomogeneous behavior.
\end{abstract}

\section{Introduction}

Finite-dimensional convex cones admit several natural extensions to matrix
levels. Operator systems provide an abstract framework for these extensions,
replacing scalar positivity by a compatible family of cones of
matrix-valued elements. For a given cone, the minimal operator system is one
canonical choice. A basic structural
question is whether this infinite hierarchy of matrix cones can nevertheless
be represented using matrices of some fixed size over a commutative
\(C^*\)-algebra. In the terminology used here, this asks whether the minimal
operator system is subhomogeneous. Such a bound is a strong finiteness
property: it says that positivity at all matrix levels is controlled by a
single finite level. Questions of finite-dimensional realization for
operator systems also arise naturally in spectrahedral containment and
semidefinite optimization, as well as in dilation and inclusion-constant
problems for matrix convex sets and free spectrahedra; see
\cite{davidsondoronshalitsolel,fritznetzerthom,heltonklepmcculloughschweighofer,
passershalitsolel}.

Recent work has developed several equivalent descriptions of
subhomogeneity. Subhomogeneous operator systems in connection
with systems generated by twisted commuting unitaries were defined and studied
in \cite{kiri}, while \cite{dannemuellernetzer} relates subhomogeneity to
the \(C^*\)-envelope, duality, and
noncommutative boundary conditions. On the matrix
convex side, the extreme-point and dilation theory of free spectrahedra shows
that Arveson boundary points play the role of classical extreme points:
compact free spectrahedra are generated by their irreducible Arveson boundary
points \cite{everthelton,evertheltonklepmccullough}. These results suggest a
concrete way to test subhomogeneity: determine whether the sizes of the
irreducible Arveson boundary points are uniformly bounded.

Polyhedral cones are a particularly natural testing ground for this
question. Their scalar geometry is finite and combinatorial, but their free extensions can exhibit genuinely noncommutative
behavior at matrix levels. The simplicial case is
commutative and hence \(1\)-subhomogeneous. At the next level of complexity,
recent results on noncommutative squares show that the quadrilateral case is
controlled by levels one and two
\cite{dannemuellernetzer,farenickmalekivarelasingla}; the Arveson boundary of
a free quadrilateral has also been studied explicitly in
\cite{evertprojective}. This makes it natural to ask how the number and
configuration of the extreme rays of a polyhedral cone govern
subhomogeneity.

Our first main result gives a complete answer in dimension three. A cone
with three extreme rays has a \(1\)-subhomogeneous minimal system, and a cone
with four extreme rays has a \(2\)-subhomogeneous minimal system. In sharp
contrast, every three-dimensional cone with at least five extreme rays is not subhomogeneous.
The proof is constructive: after a projective
normalization of the dual polygonal base, we build explicit irreducible Arveson boundary
points of unbounded size.

We then develop several mechanisms for transferring these conclusions to
higher dimensions. Subhomogeneity passes to faces, to quotients by faces,
and, with the same degree, to the factors of a direct sum. Consequently, a
three-dimensional face or face quotient with at least five extreme rays is
an obstruction to subhomogeneity. We also analyze free polyhedra over
products of two simplices. Their Arveson boundary points correspond to pairs of
projection-valued measures, so the problem is related to representations of
the free product \(\mathbb C^p*_{\mathbb C}\mathbb C^q\). This yields a
dichotomy: the product \(\Delta_1\times\Delta_1\) is generated at level two,
whereas every other product \(\Delta_{p-1}\times\Delta_{q-1}\), with
\(p,q\geqslant2\), has irreducible boundary points of unbounded size.

As an application, we classify all \(n\)-dimensional cones with exactly
\(n+1\) extreme rays: their minimal systems are subhomogeneous precisely for
the direct sums of a three-dimensional four-ray cone with a simplicial
factor, and in that case they are \(2\)-subhomogeneous. Using a combinatorial
result on polytopes, we obtain the same structural classification
for every cone whose facets omit at most two extreme rays: such a cone has a
subhomogeneous minimal system exactly when it is a direct sum of simplicial
cones and three-dimensional four-ray cones. Finally, we compare the two
behaviors for fixed dimension and fixed ray count. We construct
\(2\)-subhomogeneous examples with \(r\) rays whenever
\(n\leqslant r\leqslant n+\lfloor n/3\rfloor\), while
non-subhomogeneous examples exist for every \(r\geqslant n+1\) when
\(n\geqslant4\). Thus, over a substantial range, dimension and ray count
alone do not determine subhomogeneity.

\textbf{Disclaimer:} The AI system Prism by OpenAI was used to assist with
literature searches, typesetting, and the streamlining of proofs.
I take full responsibility for potential errors.

\section{Preliminaries}
Let \(C\subseteq V\) be a proper polyhedral cone in a finite-dimensional
real vector space, and fix an interior point \(u\in C\). The dual cone
\(C^\vee\) has the compact base
\[
    B=\{\varphi\in C^\vee\mid\varphi(u)=1\}
\]
of dimension \(\dim V-1\), which is a polytope.

For \(s\geqslant1\),
write \(\operatorname{Her}_s\) for the real vector space of
\(s\times s\) Hermitian matrices.
The \emph{minimal operator system over \(C\)} is the matrix-ordered
vector space
\[
    C^{\min}=\bigl(V,{(C_s^{\min})}_{s\geqslant1},u\bigr),
\]
where, under the natural identification
\(\operatorname{Her}_s(V)=\operatorname{Her}_s\otimes V\),
\[
    C_s^{\min}
    =\operatorname{cone}\bigl\{A\otimes c\mid
        A\in\operatorname{Her}_s,\ A\geqslant0,\ c\in C\bigr\}.
\]
Thus \(C_1^{\min}=C\), the matrix order unit at level \(s\) is
\(I_s\otimes u\), and \(C^{\min}\) is the smallest matrix ordering on
\(V\) whose scalar positive cone is \(C\); see, for example,
the discussion in~\cite[Section~3]{dannemuellernetzer}.

The \emph{(maximal) free polyhedron over
\(B\)} is the graded set \(B^{\rm ph}={(B^{\rm ph}(s))}_{s\geqslant1}\)
defined as follows. For
\(X=\sum_k A_k\otimes\varphi_k\in
\operatorname{Her}_s\otimes_{\mathbb R}V^*\) and \(c\in V\), set
\[
    X(c)=\sum_k\varphi_k(c)A_k\in\operatorname{Her}_s.
\]
Then
\[
\begin{aligned}
    B^{\rm ph}(s)
    &=\bigl\{X\in\operatorname{Her}_s\otimes_{\mathbb R}V^*\mid
       X(c)\geqslant0\text{ for every }c\in C,\ X(u)=I_s\bigr\}.
\end{aligned}
\]
At level one, the above definition gives
\(B^{\rm ph}(1)=B\). If
\(C=\operatorname{cone}(v_1,\ldots,v_r)\), then equivalently
\[
    B^{\rm ph}(s)
    =\bigl\{X\in\operatorname{Her}_s\otimes_{\mathbb R}V^*\mid
      X(u)=I_s,\ X(v_i)\geqslant0\text{ for }1\leqslant i\leqslant r\bigr\}.
\]
Thus \(B^{\rm ph}\) is a compact matrix convex set obtained by imposing
at every matrix level the facet inequalities of \(B\).

An operator system is called \emph{\(d\)-subhomogeneous} if it admits a
\(d\)-commutative realization, that is, a unital complete order embedding
into
\(\operatorname{Mat}_d(\mathcal A)\) for some commutative
\(C^*\)-algebra \(\mathcal A\). Equivalently, its \(C^*\)-envelope is a
\(d\)-subhomogeneous \(C^*\)-algebra, as shown
in~\cite[Theorem~4.1]{dannemuellernetzer}.

A point \(X\in B^{\rm ph}(s)\) is an \emph{Arveson boundary point} if,
whenever \(r\geqslant1\) and \(\widetilde X\in B^{\rm ph}(s+r)\) has the
block form
\[
    \widetilde X=
    \begin{pmatrix}X&\beta\\ \beta^*&\gamma\end{pmatrix},
\]
one has \(\beta=0\). It is \emph{irreducible} if the commutant of
\(\{X(c)\mid c\in V\}\) consists only of scalar matrices.

The ingredients of the following proposition are all known from existing work on
subhomogeneous operator systems and on Arveson and free extreme points;
see, in particular,~\cite{everthelton,evertheltonklepmccullough,kiri,kriel}.
We include the argument for completeness.

\begin{proposition}
\label{prop:bounded-level-generation}
    For every \(d\geqslant1\), the following are equivalent:
    \begin{enumerate}
        \item \(C^{\min}\) is \(d\)-subhomogeneous;
        \item \label{eq:bounded-level-generation}
        \(B^{\rm ph}=\operatorname{mconv}\bigl(B^{\rm ph}(d)\bigr)\);
        \item Every irreducible Arveson boundary point of \(B^{\rm ph}\) has size at most \(d\).
    \end{enumerate}
    Consequently, \(C^{\min}\) is subhomogeneous if and only if the sizes of
    the irreducible Arveson boundary points of \(B^{\rm ph}\) are uniformly
    bounded. Equivalently, the subhomogeneity degree is the smallest matrix
    size from which the whole free polyhedron can be recovered by direct sums
    and isometric compressions.
\end{proposition}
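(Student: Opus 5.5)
I would prove the cycle (1)\(\Rightarrow\)(2)\(\Rightarrow\)(3)\(\Rightarrow\)(1). The bridge throughout is the standard duality between \(C^{\min}\) and \(B^{\rm ph}\). For each \(s\), unital completely positive maps \(C^{\min}\to\operatorname{Mat}_s\) are exactly the points of \(B^{\rm ph}(s)\): a unital linear map \(\Phi\colon V\to\operatorname{Mat}_s\) is \(s\)-positive on \(C^{\min}\) iff \(\Phi(c)\geqslant0\) for \(c\in C\), because \(C_s^{\min}\) is generated by the tensors \(A\otimes c\). Moreover, for minimal systems positivity automatically gives complete positivity. The matrix cones of \(C^{\min}\) are thus recovered from \(B^{\rm ph}\) by the bipolar relation: \(Y\in\operatorname{Her}_m(V)\) lies in \(C^{\min}_m\) iff \((\mathrm{id}\otimes X)(Y)\geqslant0\) for all \(X\in B^{\rm ph}(s)\) and all \(s\). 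This is the effros–Winkler separation theorem applied to the closed cone \(C^{\min}_m\).

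\textbf{(1)\(\Rightarrow\)(2).} Suppose \(\iota\colon C^{\min}\to\operatorname{Mat}_d(\mathcal A)\) is a unital complete order embedding with \(\mathcal A=C(K)\). Composing with evaluation at points of \(K\) gives points \(X_t\in B^{\rm ph}(d)\). By the embedding property, \(Y\in C^{\min}_m\) iff \((\mathrm{id}\otimes X_t)(Y)\geqslant0\) for all \(t\). Hence the polar of \(\{X_t\}\) equals the polar of \(B^{\rm ph}\). The matrix-convex bipolar theorem then gives
\[
B^{\rm ph}=\overline{\operatorname{mconv}}\{X_t\}\subseteq\operatorname{mconv}(B^{\rm ph}(d)).
\]
Closure is not an issue here: at a fixed level, the matrix convex hull of the compact set \(B^{\rm ph}(d)\) is obtained by compressions of direct sums of boundedly many elements (a Carathéodory-type argument), and so it is closed. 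The reverse inclusion is trivial.

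\textbf{(2)\(\Rightarrow\)(3).} Let \(X\in B^{\rm ph}(s)\) be an irreducible Arveson boundary point. By (2), \(X=W^*(\bigoplus_j Z_j)W\) with \(Z_j\in B^{\rm ph}(d)\) and \(W\) an isometry. Since \(X\) is Arveson, the standard argument shows that \(W\) intertwines, so \(X\) is a direct summand of \(\bigoplus_j Z_j\). I would cite Evert–Helton–Klep–McCullough for this step, or argue it directly: write \(\bigoplus_j Z_j\) in block form relative to \(\operatorname{ran}W\) and use the Arveson property to kill the off-diagonal block. Irreducibility then makes \(X\) unitarily equivalent to a subrepresentation of some \(Z_j\). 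Hence \(s\leqslant d\).

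\textbf{(3)\(\Rightarrow\)(1).} This is the main obstacle, because it needs the Krein–Milman-type theorem for free polyhedra: every element of the compact matrix convex set \(B^{\rm ph}\) is a compression of a direct sum of irreducible Arveson boundary points. This theorem is due to Evert–Helton (and Kriel) for compact free spectrahedra, and \(B^{\rm ph}\) is one, being defined by the linear matrix inequality \(\bigoplus_i X(v_i)\geqslant0\). I would invoke it rather than reprove it. The proof goes by dilating any point to a maximal one via Arveson's dilation process in finite dimensions, which terminates by a dimension count, and then decomposing into irreducibles. With (3) this yields (2). For (1), let \(K\) be the compact set of all irreducible boundary points, padded to size \(d\) by adding a fixed scalar point of \(B\) as a direct summand. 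Define \(\iota\colon V\to C(K,\operatorname{Mat}_d)\) by \(\iota(v)(X)=X(v)\). This map is unital and completely positive. By the bipolar description it is also a complete order embedding, since positivity under all irreducible boundary points propagates, via compressions and direct sums, to positivity under all of \(B^{\rm ph}\). Alternatively, (1)\(\Leftrightarrow\)(3) follows from \cite[Theorem~4.1]{dannemuellernetzer}, since the boundary representations of \(C^{\min}\) are the irreducible Arveson points.

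\textbf{Consequences.} The "consequently" statements follow immediately by taking the union over \(d\), together with the reading of (2) as recovery by direct sums and isometric compressions.
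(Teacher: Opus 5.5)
Your proposal is correct. Steps (2)$\Rightarrow$(3) and (3)$\Rightarrow$(2) match the paper, but you handle (1)$\Leftrightarrow$(2) by a different route.

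\textbf{Comparison for (1)$\Leftrightarrow$(2).} The paper never works with a commutative realization directly.
\begin{itemize}
\item It quotes the minimal--maximal duality of \cite[Theorem~4.1]{dannemuellernetzer}: (1) holds iff $(C^\vee)^{\max}$ is generated by its levels of size at most $d$.
\item It then passes between matrix-conic decompositions in $(C^\vee)^{\max}$ and unital matrix convex combinations in $B^{\rm ph}$. It splits off $\ker P_j$ and normalizes $Y_j\mapsto P_j^{-1/2}Y_jP_j^{-1/2}$, $V_j\mapsto P_j^{1/2}V_j$.
\end{itemize}
You instead argue from the definition.
\begin{itemize}
\item Evaluating at points of the spectrum of $\mathcal A$ gives level-$d$ points $X_t$ whose polar is again $C^{\min}$.
\item The Effros--Winkler bipolar theorem then gives $B^{\rm ph}=\overline{\operatorname{mconv}}\{X_t\}$.
\item You need an extra Carath\'eodory/compact-base argument to remove the closure. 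Your argument for this is valid, because $\{V^*ZV:\operatorname{tr}(V^*V)=1\}$ is compact and $0$ is not in its convex hull.
\item Conversely, you write the realization $v\mapsto(X\mapsto X(v))$ down explicitly.
\end{itemize}
What each buys: your route is more self-contained and makes the realization concrete. The paper's route is shorter given the duality theorem, absorbs the closedness question into the citation, and sets up the dual-cone formulation that is reused in \Cref{prop:face-figures}.

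\textbf{The steps you share with the paper.}
\begin{itemize}
\item For (2)$\Rightarrow$(3), the Arveson property makes the isometry intertwine. Irreducibility then gives an injective intertwiner into some $\mathbb C^d$, which is the paper's $V_j^*V_j=\lambda_jI_s$.
\item For (3)$\Rightarrow$(2), both use Evert--Helton plus padding.
\end{itemize}

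\textbf{Small repairs.}
\begin{itemize}
\item The set of irreducible Arveson boundary points need not be closed. Once (2) is known, take $K$ to be its closure in $B^{\rm ph}(d)$, or simply $K=B^{\rm ph}(d)$.
\item When citing Evert--Helton, note as the paper does that $B^{\rm ph}$ is closed under complex conjugation, since its defining pencil has real coefficients.
\item Your alternative via boundary representations and \cite[Theorem~4.1]{dannemuellernetzer} is not immediate. It first requires excluding infinite-dimensional boundary representations, which is exactly what the Krein--Milman step supplies.
\end{itemize}
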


\begin{proof}
    By the minimal--maximal duality criterion
    in~\cite[Theorem~4.1]{dannemuellernetzer}, condition (1) holds exactly when
    \({(C^\vee)}^{\max}\) is generated, as a matrix convex cone, by its levels
    of size at most \(d\). We show that this is equivalent to (2).

    Suppose first that the dual cone is generated by its first \(d\) levels,
    and take \(X\in B^{\rm ph}(s)\). Write
    \[
        X=\sum_j V_j^*Y_j V_j,
        \qquad Y_j\in{(C^\vee)}^{\max}_{t_j},\quad t_j\leqslant d.
    \]
    Put \(P_j=Y_j(u)\geqslant0\). Since \(u\) is an interior point of \(C\),
    for every \(c\in V\) there is \(\lambda>0\) such that
    \(\lambda u\pm c\in C\). Hence
    \(\lambda P_j\pm Y_j(c)\geqslant0\), which implies that every \(Y_j(c)\)
    vanishes on \(\ker P_j\). After splitting off this common zero block,
    replace \(Y_j\) by
    \[
        \widehat Y_j=P_j^{-1/2}Y_j P_j^{-1/2}
    \]
    and \(V_j\) by \(P_j^{1/2}V_j\). We may therefore assume that
    \(Y_j(u)=I_{t_j}\). Evaluating at \(u\) gives
    \(\sum_j V_j^*V_j=I_s\), so \(X\) belongs to the matrix convex hull of the
    first \(d\) levels. A point of a smaller level can be padded to level
    \(d\) by taking its direct sum with scalar points of \(B\), and then
    recovered by compression. Thus (2) follows.

    Conversely, assume (2), let \(Y\in{(C^\vee)}^{\max}_s\), and put
    \(P=Y(u)\). The same argument shows that \(Y\) vanishes on \(\ker P\).
    On \(\operatorname{ran}P\), the normalized point
    \(\widehat Y=P^{-1/2}YP^{-1/2}\) belongs to \(B^{\rm ph}\). A matrix convex decomposition of \(\widehat Y\) by points of size \(d\), followed
    by multiplication by \(P^{1/2}\), gives a matrix-conic decomposition of
    \(Y\) by elements of the \(d\)-th level of \({(C^\vee)}^{\max}\). This proves (1).

    Now suppose (2) holds and let \(X\in B^{\rm ph}(s)\) be an irreducible
    Arveson boundary point. Choose a matrix convex decomposition
    \[
        X=\sum_j V_j^*Y^{(j)}V_j,
        \qquad Y^{(j)}\in B^{\rm ph}(d),
        \qquad \sum_j V_j^*V_j=I_s.
    \]
    Let \(\Gamma:\mathbb C^s\to\bigoplus_j\mathbb C^{d}\) be the isometry with
    components \(V_j\), and put \(Y=\bigoplus_j Y^{(j)}\). Then \(X\) is the
    compression of \(Y\) to \(\Gamma\mathbb C^s\). After a unitary
    conjugation, the Arveson boundary property implies that this subspace
    reduces every coordinate of \(Y\), and hence
    \[
        Y^{(j)}(c)V_j=V_j X(c)\qquad(c\in V).
    \]
    It follows that \(V_j^*V_j\) commutes with every coordinate of \(X\).
    By irreducibility,
    \[
        V_j^*V_j=\lambda_j I_s
    \]
    for some \(\lambda_j\geqslant0\). Whenever \(\lambda_j>0\), the map
    \(V_j\) has rank \(s\), so \(d\geqslant s\). Since at least one
    \(\lambda_j\) is nonzero, we obtain (3).

    Conversely, suppose (3) holds. Since \(B\) is a polytope,
    \(B^{\rm ph}\) is a bounded free spectrahedron defined by a pencil with
    real scalar coefficients; in particular, it is closed under complex
    conjugation. It is therefore the matrix convex hull of its irreducible
    Arveson boundary points by~\cite[Theorems~1.1 and~1.2]{everthelton}.
    By assumption, all these points
    have size at most \(d\), and the padding argument above shows that
    \(B^{\rm ph}\) is generated by its \(d\)-th level. Thus (2) holds.
\end{proof}

\section{Cones in dimension three}

We begin with the first nontrivial dimension. A three-dimensional
proper cone is determined, up to the choice of an affine section, by a
convex polygon. A cone with three extreme rays is simplicial, so its minimal
operator system is \(1\)-subhomogeneous. Every cone with four extreme rays is
isomorphic to the cone over a quadrilateral and has a \(2\)-subhomogeneous
minimal system; see~\cite[Corollary~3.3]{farenickmalekivarelasingla} and
~\cite[Theorem~6.1(ii)]{dannemuellernetzer}, as well as
\Cref{cor:n-plus-one} below. The following result shows that the passage from
four to five extreme rays is decisive: from five rays onward, the associated
free polyhedron has irreducible Arveson boundary points at arbitrarily large
levels.

\begin{theorem}
\label{thm:five-ray}
    Let \(C\) be a proper three-dimensional polyhedral cone with at least five
    extreme rays; equivalently, let \(B\) be a convex polygon with at least
    five vertices. Then, for every \(m\geqslant1\), the level
    \(B^{\rm ph}(2m)\) contains an irreducible Arveson boundary point.
    Consequently, \(B^{\rm ph}\) is not generated by any fixed finite level,
    and \(C^{\min}\) is not subhomogeneous.
\end{theorem}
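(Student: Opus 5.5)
The plan is to reduce to a five-vertex configuration, normalize it projectively, and then build explicit boundary points at every even level. Throughout, \(B\) is the polygon \(B^{\rm ph}(1)\) and the points we construct live in \(B^{\rm ph}(2m)\).

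\emph{Reduction to five constraints.} Among the facet inequalities of \(B\), pick five consecutive edges. The five lines carrying them bound a convex pentagon \(P\supseteq B\), obtained by dropping the remaining inequalities. The key is to construct points \(X\in B^{\rm ph}(2m)\) that satisfy all facet inequalities of \(B\), yet have a rank deficiency only in these five constraints. The other constraints are to be strictly positive definite. If the constructed \(X\) also satisfies the remaining constraints strictly, then any dilation \(\widetilde X\in B^{\rm ph}(2m+r)\) satisfies in particular the five pentagon constraints, and the Arveson property relative to those five already forces \(\beta=0\). To keep the remaining constraints strict, I will localize \(X\) near a small region of \(B\), scaling and translating so that the joint numerical range of \(X\) stays close to the relevant part of the boundary.

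\emph{Normalization and construction.} A three-dimensional cone is unchanged under invertible linear maps, and \(B^{\rm ph}\) transforms equivariantly under projective changes of the base. So I normalize the pentagon cone, keeping two consecutive edges as coordinate constraints \(x\ge0\), \(y\ge0\). Recall that the quadrilateral already admits \(2\times2\) boundary points, built from two projections in generic position. For the pentagon, I will use a chain construction on \(\mathbb C^{2m}\). Take a family of rank-\(m\) projections \(Q_1,\dots,Q_5\), one attached to each edge, and require the constraint matrix \(X(v_i)\) of the \(i\)-th edge to be a positive multiple of \(I-Q_i\) plus small corrections. The projections are chosen in the standard way, using Jordan-angle pairs arranged in a tridiagonal or Jacobi pattern with pairwise distinct angles. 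This pattern makes the generated algebra all of \(\operatorname{Mat}_{2m}\). That is precisely the irreducibility condition on the commutant of \(\{X(c)\}\).

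\emph{Boundary property via kernels.} For a point of a free spectrahedron \(\{L(X)\ge0\}\), the standard criterion of Evert--Helton is this: \(X\) is Arveson exactly when the only solutions \(\beta\) of the linear system \(\ker L(X)\subseteq\ker\bigl(\sum_i A_i\otimes\beta_i\bigr)^{*}\) are trivial. Concretely, if \(\widetilde X\) dilates \(X\), then \(\widetilde X(v_i)\ge0\) forces the off-diagonal block \(\beta(v_i)\) to vanish on \(\ker X(v_i)\). Here \(\beta\) is linear in \(c\) with three real coordinates, so \(\beta\) is determined by three \(2m\times r\) matrices. Its values on the five edge vectors satisfy two linear relations. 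This gives \(5m\) kernel-vanishing conditions, one for each dimension of \(\ker X(v_i)\), against \(3\cdot 2m=6m\) unknowns per column. That count is not yet enough, so I will exploit the linear dependencies among the \(v_i\) to propagate vanishing around the cycle of edges. The idea is as follows. From \(\beta(v_1)|_{\ker}=0\) and \(\beta(v_2)|_{\ker}=0\), together with \(v_3\) being a combination of \(v_1\) and \(v_2\) (up to the \(u\)-direction, where \(\beta(u)=0\)), one derives vanishing on successively larger subspaces until \(\beta=0\).

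\emph{Main obstacle.} I expect the main difficulty to be the explicit choice of the matrices. They must be simultaneously positive (membership), have kernels of the right sizes in general position to force \(\beta=0\), and be irreducible, and all of this at every level \(2m\). I would first try \(m=1\) numerically to find the pattern. I would then build level \(2m\) as a tridiagonal coupling of \(m\) copies of \(2\times2\) blocks, with nearest-neighbor couplings and distinct angles. Finally I would verify the kernel propagation by induction along the chain.

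Once these points exist, the remaining claims follow from \Cref{prop:bounded-level-generation}. Irreducible Arveson boundary points of unbounded size mean that \(B^{\rm ph}\) is not generated by any fixed finite level, and hence \(C^{\min}\) is not subhomogeneous.
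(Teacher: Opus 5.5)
Your outer frame matches the paper: pass to a pentagon \(\widetilde B\supseteq B\) cut out by five facet inequalities, observe that a point of \(B^{\rm ph}\) which is Arveson for \(\widetilde B^{\rm ph}\) is Arveson for \(B^{\rm ph}\), and conclude with \Cref{prop:bounded-level-generation}. The pentagon step needs a projective change first: for polygons with many edges, five consecutive edge lines can bound an unbounded region.

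\textbf{The gap.} The substance of the theorem is the construction, and you do not give one. No matrices are specified, so membership in \(B^{\rm ph}(2m)\), the boundary property and irreducibility are never verified for any \(m\geqslant2\). You yourself list the choice of matrices as the main obstacle.

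\textbf{The heuristics also fail.}
\begin{itemize}
\item \emph{Unknown count.} Since \(\widetilde X(u)=I\), the off-diagonal block satisfies \(\beta(u)=0\). So each column of \(\beta\) carries \(4m\) unknowns, not \(6m\).
\item \emph{No propagation.} The conditions \(\operatorname{ran}\beta(v_i)\subseteq\operatorname{ran}X(v_i)\) form a single linear system, and the relations among the \(v_i\) are already built into it. If the kernel dimensions add up to less than \(4m\), there is a nonzero solution, and no propagation around the cycle can force \(\beta=0\).
\item \emph{Five \(m\)-dimensional kernels are impossible.} Your count of \(5m\) conditions presupposes five kernels of dimension \(m\). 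This is incompatible with irreducibility for \(m\geqslant2\). Every pentagon is projectively equivalent to \(\{0\leqslant x,y\leqslant1,\ \alpha x+\beta y\geqslant1\}\), and projective changes act on the constraint matrices by congruence. On \(\mathbb C^{2m}\), \(m\)-dimensional kernels of \(X\), \(I-X\), \(Y\), \(I-Y\) force \(X\) and \(Y\) to be projections. Two projections generate a \(C^*\)-algebra whose irreducible representations have dimension at most \(2\).
\item \emph{Localization.} ``Scaling and translating'' a point to keep the remaining facets of \(B\) strict would destroy its kernels in the five pentagon constraints. The localization has to be built into the construction.
\end{itemize}

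\textbf{What the paper does.} Your instinct to glue \(m\) pairwise inequivalent \(2\times2\) two-projection blocks is the right starting point. What is missing is a coupling that reconciles the kernel count with irreducibility.
\begin{itemize}
\item The paper normalizes so that the extra facets of \(B\) cut only the corner \((0,1/\beta)\) of \(\widetilde B_{\alpha,\beta}\).
\item It takes a direct sum of irreducible pairs \((P_j,Q_j)\) of rank-one projections with small distinct parameters \(c_j\). This saturates the four square constraints with \(m\)-dimensional kernels while \(M=\alpha P+\beta Q-I>0\).
\item It then makes a single rank-one perturbation \(X=P-tR\), \(R=rr^*\), where \(r\) has a nonzero component in every block, \(a=r^*M^{-1}r\) and \(t=1/(\alpha a)\).
\item This trades one kernel dimension of \(I-X\) for the kernel \(\mathbb C M^{-1}r\) of \(\alpha X+\beta Q-I=M-\tfrac1aR\). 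The kernel dimensions become \(m,m-1,m,m,1\), summing to exactly \(4m\).
\end{itemize}

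Write \(\xi,\eta\) for the off-diagonal blocks of a dilation of \(X\) and \(Y\). The three required properties then follow:
\begin{itemize}
\item \emph{Boundary property:} from \(\operatorname{ran}\xi\subseteq\operatorname{ran}X\cap\operatorname{ran}(I-X)=\mathbb Cr\), from \(\eta=0\), and from \(\langle M^{-1}r,r\rangle>0\).
\item \emph{Irreducibility:} from \(R=X(I-X)/(t(1-t))\), the inequivalence of the blocks, and the fact that \(r\) meets every block.
\item \emph{Membership in \(B^{\rm ph}(2m)\):} as \(c_j\to0\), the pair degenerates to a commuting pair with joint spectrum in \(\{(1/\alpha,0),(1,0),(0,1)\}\), whose convex hull avoids the cut corner.
\end{itemize}
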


\begin{proof}
    After an invertible linear change in
    the homogenized coordinates and a corresponding choice of affine slice,
    we may assume that \(B\) is contained in the pentagon
    \[
        \widetilde B=\widetilde B_{\alpha,\beta}
        =\bigl\{(x,y)\mid 0\leqslant x,y\leqslant1,\
        \alpha x+\beta y\geqslant1\bigr\},
        \qquad \alpha,\beta>1,
    \]
    and is obtained from \(\widetilde B\) by further inequalities which
    only cut the corner \((0,1/\beta)\). More precisely, every additional
    inequality is strict at
    \[
        p_1=(\tfrac1\alpha,0),
        \qquad p_2=(1,0),
        \qquad p_3=(0,1).
    \]
    Hence, for
    \[
        T=\operatorname{conv}\{p_1,p_2,p_3\},
    \]
    some relative neighborhood of \(T\) in \(\widetilde B\) is contained
    in \(B\). The change of homogenized coordinates induces a projective
    transformation of the base, which preserves the Arveson boundary and
    irreducibility~\cite[Theorem~3.6 and Lemma~3.2]{evertprojective}.

    The maximal free polyhedron over
    \(\widetilde B\) is
    \[
        \widetilde B^{\rm ph}(s)=
        \bigl\{(X,Y)\in{\operatorname{Her}_s(\mathbb C)}^2\mid
        0\leqslant X,Y\leqslant I,
        \ \alpha X+\beta Y-I\geqslant0\bigr\}.             \tag{1}
    \]

    Fix \(m\geqslant1\), put \(H=\bigoplus_{j=1}^m\mathbb C^2\), and choose
    distinct numbers
    \[
        0<c_1,\ldots,c_m<
        \frac{(\alpha-1)(\beta-1)}{\alpha\beta}.
    \]
    On the \(j\)-th summand of \(H\), let
    \[
        P_j=\begin{pmatrix}1&0\\0&0\end{pmatrix},
        \qquad
        Q_j=
        \begin{pmatrix}
            c_j&\sqrt{c_j(1-c_j)}\\
            \sqrt{c_j(1-c_j)}&1-c_j
        \end{pmatrix},
    \]
    and set
    \[
        P=\bigoplus_{j=1}^m P_j,
        \qquad Q=\bigoplus_{j=1}^m Q_j,
        \qquad M=\alpha P+\beta Q-I.
    \]
    Both \(P_j\) and \(Q_j\) are rank-one projections. On the \(j\)-th block,
    \[
        \det(M_j)=(\alpha-1)(\beta-1)-\alpha\beta c_j>0,
        \qquad \operatorname{tr}(M_j)=\alpha+\beta-2>0.
    \]
    Hence \(M>0\).

    Let \(e_j=(1,0)\) in the \(j\)-th block, and set
    \[
        r=\frac1{\sqrt m}\sum_{j=1}^m e_j,
        \qquad R=rr^*.
    \]
    Define
    \[
        a=r^*M^{-1}r,
        \qquad t=\frac1{\alpha a},
        \qquad X=P-tR,
        \qquad Y=Q.
    \]
    The upper-left entry of \(M_j^{-1}\) is
    \[
        \frac{\beta(1-c_j)-1}
        {(\alpha-1)(\beta-1)-\alpha\beta c_j}
        >\frac1\alpha.
    \]
    Since \(r\) has a component in each first coordinate, this gives
    \(a>1/\alpha\), and therefore \(0<t<1\). Thus
    \(0\leqslant X\leqslant I\), while \(Y\) is a projection. Moreover,
    the rank-one perturbation criterion yields
    \[
        \alpha X+\beta Y-I=M-\frac1aR\geqslant0,
    \]
    and equality in that criterion gives
    \[
        \ker(\alpha X+\beta Y-I)=\mathbb C f,
        \qquad f=M^{-1}r.
    \]
    Set \(X_m=X\) and \(Y_m=Y\). Hence
    \(Z_m=(X_m,Y_m)\) belongs to \(\widetilde B^{\rm ph}(2m)\).

    We now choose the parameters sufficiently small to ensure that
    \(Z_m\in B^{\rm ph}(2m)\). Put \(\delta=\max_j\{c_j\}\). Since
    \[
        a=\frac1m\sum_{j=1}^m
        \frac{\beta(1-c_j)-1}
        {(\alpha-1)(\beta-1)-\alpha\beta c_j},
    \]
    for \(\delta\to0\) one has
    \[
        Y\to Y^{(0)}=I-P,
        \qquad
        a\to\frac1{\alpha-1},
        \qquad
        t\to\frac{\alpha-1}{\alpha},
    \]
    and hence
    \[
        X\to
        X^{(0)}=P-\frac{\alpha-1}{\alpha}R.
    \]
    The limiting pair \((X^{(0)},Y^{(0)})\) is commuting and has joint
    spectrum contained in
    \[
        \bigl\{(\tfrac1\alpha,0),(1,0),(0,1)\bigr\}.
    \]
    Its joint numerical range is therefore contained in \(T\).
    Thus, by choosing all the distinct \(c_j\)'s
    sufficiently small,
    the joint numerical range of \((X,Y)\) lies in the relative neighborhood
    of \(T\) contained in \(B\). Equivalently, every facet inequality
    of \(B\) holds at the matrix level, and hence
    \[
        Z_m\in B^{\rm ph}(2m).
    \]

    We now show that \(Z_m\) is an Arveson boundary point. Suppose
    \[
        \widetilde X=\begin{pmatrix}X&\xi\\\xi^*&X_0\end{pmatrix},
        \qquad
        \widetilde Y=\begin{pmatrix}Y&\eta\\\eta^*&Y_0\end{pmatrix}
    \]
    is a dilation belonging to \(B^{\rm ph}\). Since the five inequalities
    defining \(\widetilde B\) are valid on \(B\), this dilation satisfies
    \((1)\). For a positive block matrix, the range of an off-diagonal block
    is contained in the range of the corresponding diagonal block. Applying
    this to \(\widetilde X\) and \(I-\widetilde X\), and using \(0<t<1\), gives
    \[
        \operatorname{ran}(\xi)
        \subseteq\operatorname{ran}(X)\cap\operatorname{ran}(I-X)
        =\mathbb Cr.
    \]
    Since \(Y\) is a projection, the same argument applied to
    \(\widetilde Y\) and \(I-\widetilde Y\) gives \(\eta=0\). Finally,
    positivity of \(\alpha\widetilde X+\beta\widetilde Y-I\) implies
    \[
        \operatorname{ran}(\alpha\xi+\beta\eta)
        \subseteq\operatorname{ran}(\alpha X+\beta Y-I)=f^\perp.
    \]
    But \(\operatorname{ran}(\xi)\subseteq\mathbb Cr\), \(\eta=0\), and
    \[
        \langle f,r\rangle=r^*M^{-1}r=a>0.
    \]
    Thus \(\xi=0\), proving the Arveson boundary property.

    It remains to prove irreducibility. A direct calculation gives
    \[
        R=\frac{X(I-X)}{t(1-t)}.
    \]
    Hence an operator commuting with \(X\) also commutes with \(R\) and with
    \(P=X+tR\). The pairs \((P_j,Q_j)\) are irreducible, and the distinct
    parameters \(c_j\) make them pairwise inequivalent. Therefore
    \[
        \{P,Q\}'=
        \left\{\bigoplus_{j=1}^m\lambda_j I_2\mid
        \lambda_j\in\mathbb C\right\}.
    \]
    Commutation with \(R=rr^*\) forces all \(\lambda_j\) to be equal, because
    the component of \(r\) in every block is nonzero. Consequently,
    \[
        \{X,Y\}'=\mathbb C I_{2m},
    \]
    so \(Z_m\) is irreducible.
\end{proof}

\section{Higher dimensions}

We next transfer the three-dimensional obstruction to cones of arbitrary
dimension. Two natural lower-dimensional objects associated with a cone are
its faces and its face quotients. The next two propositions show that
subhomogeneity passes to both.

\begin{proposition}
\label{prop:faces}
    Let \(C\) be a proper polyhedral cone and \(F\) a face of \(C\). If
    \(C^{\min}\) is \(d\)-subhomogeneous, then \(F^{\min}\) is \(d\)-subhomogeneous.
\end{proposition}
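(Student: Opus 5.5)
The plan is to argue on the dual side, through the minimal--maximal duality already used in the proof of \Cref{prop:bounded-level-generation}: by \cite[Theorem~4.1]{dannemuellernetzer}, \(C^{\min}\) is \(d\)-subhomogeneous exactly when the matrix convex cone \({(C^\vee)}^{\max}\) is generated by its levels of size at most \(d\). Let \(W=\operatorname{span}(F)\). Then \(F\) is a proper polyhedral cone in \(W\): it is pointed because \(C\) is, and it is full-dimensional in \(W\). So \(F^{\min}\) is defined on \(W\) with respect to an interior point of \(F\), and the same criterion applies to \({(F^\vee)}^{\max}\subseteq\bigcup_s\operatorname{Her}_s\otimes W^*\). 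It therefore suffices to show that \({(F^\vee)}^{\max}\) is generated by levels at most \(d\) whenever \({(C^\vee)}^{\max}\) is.

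\textbf{Restriction map.} Let \(\rho\colon V^*\to W^*\) be restriction, and let \(\rho_s=\mathrm{id}\otimes\rho\) at level \(s\). These maps satisfy \(\rho_s(\sum_j V_j^*Y_jV_j)=\sum_j V_j^*\rho_{t_j}(Y_j)V_j\), because \(\rho\) acts only on the \(V^*\) tensor factor. They also send \({(C^\vee)}^{\max}_s\) into \({(F^\vee)}^{\max}_s\), since \(Y(c)\geqslant0\) for all \(c\in C\) in particular holds for \(c\in F\).

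\textbf{Surjectivity (main step).} I will show that each \(\rho_s\) maps \({(C^\vee)}^{\max}_s\) onto \({(F^\vee)}^{\max}_s\). Let \(C=\operatorname{cone}(v_1,\ldots,v_r)\), and choose an exposing functional \(\psi\in C^\vee\) with \(F=C\cap\ker\psi\). Then \(\psi(v_i)>0\) for every \(v_i\notin F\), and \(\psi|_W=0\).

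Given \(Y\in{(F^\vee)}^{\max}_s\), fix a linear complement of \(W\) in \(V\) and extend \(Y\) to some \(Y'\in\operatorname{Her}_s\otimes V^*\), for instance by zero on the complement. Set \(\widetilde Y=Y'+\lambda\, I_s\otimes\psi\).

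\begin{itemize}
    \item For \(v_i\in F\), we have \(\widetilde Y(v_i)=Y(v_i)\geqslant0\).
    \item For \(v_i\notin F\), we have \(\widetilde Y(v_i)=Y'(v_i)+\lambda\psi(v_i)I_s\), which is positive semidefinite once \(\lambda\) is large, since \(\psi(v_i)>0\) and there are finitely many rays.
\end{itemize}

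Hence \(\widetilde Y\in{(C^\vee)}^{\max}_s\) and \(\rho_s(\widetilde Y)=Y\). I expect this to be the only step needing care, and the exposing functional handles it.

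\textbf{Conclusion.} Take \(Y\in{(F^\vee)}^{\max}_s\) and lift it to \(\widetilde Y\). By hypothesis, \(\widetilde Y=\sum_j V_j^*Y_jV_j\) with \(Y_j\in{(C^\vee)}^{\max}_{t_j}\) and \(t_j\leqslant d\). Applying \(\rho_s\) gives \(Y=\sum_jV_j^*\rho_{t_j}(Y_j)V_j\) with \(\rho_{t_j}(Y_j)\in{(F^\vee)}^{\max}_{t_j}\). So \({(F^\vee)}^{\max}\) is generated by levels at most \(d\), and the duality criterion yields that \(F^{\min}\) is \(d\)-subhomogeneous.

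The choice of order unit plays no role in this argument. Both the duality criterion and \Cref{prop:bounded-level-generation} are formulated for any interior point, and the matrix cones involved do not depend on that choice.
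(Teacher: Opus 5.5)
Your proof is correct, but it works on the dual side, whereas the paper's proof stays on the primal side.

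\textbf{The paper's route.} It combines the equivalence of \(d\)-subhomogeneity with \(d\)-maximality and the restriction identity \(C_s^{\min}\cap\operatorname{Her}_s(W)=F_s^{\min}\), cited from Fritz--Netzer--Thom and using that \(F\) is exposed. If every \(d\)-compression of \(X\in\operatorname{Her}_s(W)\) lies in \(F_d^{\min}\subseteq C_d^{\min}\), then \(d\)-maximality of \(C^{\min}\) gives \(X\in C_s^{\min}\), hence \(X\in F_s^{\min}\).

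\textbf{How your route relates.} Your lifting lemma is the surjectivity of \(\rho_s\colon{(C^\vee)}^{\max}_s\to{(F^\vee)}^{\max}_s\). This is exactly the dual of the paper's restriction identity. Since \(\langle X,\widetilde Y\rangle=\langle X,\rho_s(\widetilde Y)\rangle\) for \(X\in\operatorname{Her}_s(W)\), the bipolar theorem shows that the identity holds if and only if \(\rho_s\) is onto up to closure. So you are in effect giving a self-contained proof of the cited lemma, and then pushing generating decompositions forward along \(\rho\).

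\textbf{What each buys.} Your route matches the paper's own proof of \Cref{prop:face-figures}. On the dual side, faces of \(C\) correspond to surjective images of \({(C^\vee)}^{\max}\), and face quotients correspond to matrix faces of it. Generation by low levels visibly passes to both. The primal route is shorter and slightly more robust. The restriction identity follows for any exposed face by applying \(\mathrm{id}\otimes\psi\) to a representation \(\sum_k A_k\otimes c_k\): every term with \(A_k\neq0\) must have \(\psi(c_k)=0\). Your uniform choice of \(\lambda\), by contrast, uses the finiteness of the ray generators, which is where polyhedrality enters.
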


\begin{proof}
    Put \(W=\operatorname{span}F\). Since \(C\) is polyhedral, the face
    \(F\) is exposed. The restriction argument
    from~\cite[Lemma~4.6]{fritznetzerthom} therefore gives
    \[
        C_s^{\min}\cap\operatorname{Her}_s(W)=F_s^{\min}
        \qquad(s\geqslant1).                              \tag{2}
    \]

    Suppose that \(C^{\min}\) is \(d\)-subhomogeneous.
    By~\cite[Theorem~4.1]{dannemuellernetzer}, it is \(d\)-maximal. Let
    \(X\in\operatorname{Her}_s(W)\), and assume that
    \[
        M^*XM\in F_d^{\min}
        \qquad\text{for every }M\in\operatorname{Mat}_{s,d}(\mathbb C).
    \]
    By \((2)\), all these compressions belong to \(C_d^{\min}\). The
    \(d\)-maximality of \(C^{\min}\) therefore implies that
    \(X\in C_s^{\min}\). Since \(X\in\operatorname{Her}_s(W)\), another
    application of \((2)\) gives \(X\in F_s^{\min}\). Thus \(F^{\min}\) is
    \(d\)-maximal and hence, again
    by~\cite[Theorem~4.1]{dannemuellernetzer}, \(d\)-subhomogeneous.
\end{proof}

Passing to a face restricts the cone to a subspace. The complementary
operation is to collapse the span of a face. For an extreme ray, the latter
operation produces the homogenized form of the usual vertex figure. This
gives a second way to locate lower-dimensional obstructions inside a
higher-dimensional cone.

\begin{proposition}
\label{prop:face-figures}
    Let \(F\) be a face of a proper polyhedral cone \(C\subseteq V\), let
    \(q:V\to V/\operatorname{span}F\) be the quotient map, and put
    \[
        C/F=q(C).
    \]
    Then \(C/F\) is a proper polyhedral cone. If \(C^{\min}\) is
    \(d\)-subhomogeneous, then \({(C/F)}^{\min}\) is
    \(d\)-subhomogeneous.

    In particular, if \(R\) is an extreme ray of \(C\), then the conclusion
    applies to the vertex-figure cone \(C/R\).
\end{proposition}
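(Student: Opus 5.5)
The plan is to dualize: a quotient of \(C\) corresponds to a face of the dual cone, which lets us reduce to a face-type statement for the maximal system, or to \Cref{prop:faces} via the equivalences of \Cref{prop:bounded-level-generation}.

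\textbf{Polyhedrality and properness.} Put \(W=\operatorname{span}F\). The image \(q(C)\) is polyhedral because it is the image of a polyhedral cone under a linear map. For properness, \(F\) is exposed, so there is \(\psi\in C^\vee\) with \(F=C\cap\ker\psi\). Now suppose \(q(c_1)=-q(c_2)\) with \(c_i\in C\). Then \(c_1+c_2\in W\cap C\). We have \(W\cap C=F\), since \(W\subseteq\ker\psi\) and \(C\cap\ker\psi=F\). Because \(F\) is a face, \(c_1,c_2\in F\), and hence \(q(c_i)=0\). This shows \(C/F\) is pointed. It is full-dimensional as the image of a full-dimensional cone, and \(q(u)\) is an interior point.

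\textbf{Dual picture.} Dually, \((V/W)^*=W^\perp\subseteq V^*\) and \((C/F)^\vee=C^\vee\cap W^\perp=C^\vee\cap F^\perp\). This is the face \(F^\Delta\) of \(C^\vee\) conjugate to \(F\).

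\textbf{Transfer of subhomogeneity.} By \cite[Theorem~4.1]{dannemuellernetzer}, \(C^{\min}\) is \(d\)-subhomogeneous exactly when \((C^\vee)^{\max}\) is generated as a matrix convex cone by levels \(\leqslant d\); this is the criterion used in the proof of \Cref{prop:bounded-level-generation}. So it suffices to show that if \((C^\vee)^{\max}\) is generated at level \(d\), so is \((F^\Delta)^{\max}\).

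Take \(Y\in(F^\Delta)^{\max}_s\), i.e.\ \(Y\in\operatorname{Her}_s\otimes W^\perp\) with \(Y(c)\geqslant0\) for all \(c\in C/F\). Then \(Y\) lies in \((C^\vee)^{\max}_s\), so we can write \(Y=\sum_jV_j^*Y_jV_j\) with \(Y_j\in(C^\vee)^{\max}_{t_j}\) and \(t_j\leqslant d\). The key step is to show each compression \(V_j^*Y_jV_j\) takes values in \(\operatorname{Her}\otimes W^\perp\). I would then use the operator-system analogue of the fact that summands of a point on a face lie on that face.

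Concretely, for \(f\in F\) we have \(0=Y(f)=\sum_jV_j^*Y_j(f)V_j\), and every \(Y_j(f)\geqslant0\). Hence \(Y_j(f)^{1/2}V_j=0\), and \(Y_j(f)V_j=0\). For a general \(w\in W\), write \(w=f_1-f_2\) with \(f_1,f_2\in F\); this is possible because \(F\) spans \(W\). Then \(Y_j(w)V_j=0\), so \(V_j^*Y_j(w)V_j=0\).

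I also need an element of \((F^\Delta)^{\max}_{t_j}\) itself, not just a compression vanishing on \(W\). For this, replace \(Y_j\) by its compression to \(\overline{\operatorname{ran}V_j}\). This has size \(\leqslant t_j\leqslant d\), and it vanishes on \(W\) because \(Y_j(w)\) kills \(\operatorname{ran}V_j\). It is therefore in \((F^\Delta)^{\max}\) at level \(\leqslant d\). Then \(Y=\sum_j\widetilde V_j^*\widetilde Y_j\widetilde V_j\) gives the required decomposition.

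Applying \cite[Theorem~4.1]{dannemuellernetzer} once more with the cone \(C/F\), whose dual is \(F^\Delta\), yields \(d\)-subhomogeneity of \((C/F)^{\min}\). The vertex-figure case is \(F=R\).

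\textbf{Main obstacle.} The main obstacle is the compression step: ensuring that the compressed pieces genuinely vanish on all of \(W\) and stay inside the dual maximal cone. The positivity argument above handles this because \(F\) spans \(W\). A secondary point is matching the normalizations (order units) with the criterion of \cite[Theorem~4.1]{dannemuellernetzer}; the conic formulation avoids these.
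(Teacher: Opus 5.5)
Your proposal is correct and follows the paper's proof essentially step by step. Like the paper, you identify \((C/F)^\vee\) with \(C^\vee\cap W^\perp\) and invoke the minimal--maximal duality of \cite[Theorem~4.1]{dannemuellernetzer}. You then decompose a point of the dual face through levels \(\leqslant d\), evaluate at \(f\in F\) to see that each \(Y_j(f)\) kills \(\operatorname{ran}V_j\), and compress each summand to that range. Your extra details fill in steps the paper leaves implicit: writing \(W=F-F\) to pass from \(F\) to all of \(W\), and using exposedness to justify \(W\cap C=F\) in the pointedness argument.
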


\begin{proof}
    Write \(W=\operatorname{span}F\). Since \(F\) is a face, the quotient
    cone \(q(C)\) is pointed. Indeed, if \(q(c_1)=-q(c_2)\) with
    \(c_1,c_2\in C\), then \(c_1+c_2\in F\), so the facial property gives
    \(c_1,c_2\in F\). It is polyhedral and spans \(V/W\), hence is proper.

    The dual cone \({(C/F)}^\vee\) identifies with
    \(C^\vee\cap W^\perp\). At every matrix level this identifies
    \(\bigl({(C/F)}^\vee\bigr)^{\max}\) with the matrix face
    \[
        \mathcal G_s=
        \bigl\{X\in{(C^\vee)}^{\max}_s\mid
        X(f)=0\text{ for every }f\in F\bigr\}.          \tag{3}
    \]

    Suppose that \(C^{\min}\) is \(d\)-subhomogeneous. By
    \Cref{prop:bounded-level-generation} and minimal--maximal duality,
    \({(C^\vee)}^{\max}\) is generated by its levels of size at most \(d\).
    The same is true for the matrix face \(\mathcal G\). Indeed, decompose a
    point of \(\mathcal G_s\) using summands of size at most \(d\). Evaluating
    the decomposition at any \(f\in F\) gives a sum of positive semidefinite
    matrices equal to zero. Consequently, after compressing each summand to
    the range of its coefficient map, every summand belongs to \(\mathcal G\)
    and the decomposition is unchanged. Thus
    \(\bigl({(C/F)}^\vee\bigr)^{\max}\) is generated by its levels of size at
    most \(d\), and minimal--maximal duality shows that
    \({(C/F)}^{\min}\) is \(d\)-subhomogeneous.

    For an extreme ray \(R\), this construction corresponds to taking the
    usual vertex figure, up to a projective transformation.
\end{proof}

\begin{corollary}
\label{cor:face-obstructions}
    Let \(C\) be a proper polyhedral cone of arbitrary finite dimension. If
    \(C\) has a three-dimensional face or face quotient with at least five
    extreme rays, then \(C^{\min}\) is not subhomogeneous.
\end{corollary}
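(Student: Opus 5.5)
The plan is to argue by contraposition, combining \Cref{thm:five-ray} with \Cref{prop:faces} and \Cref{prop:face-figures}. Suppose that \(C^{\min}\) is \(d\)-subhomogeneous for some \(d\geqslant1\). Let \(G\) be a three-dimensional cone with at least five extreme rays that is either a face \(F\) of \(C\) or a face quotient \(C/F\). We will show that \(G^{\min}\) is then \(d\)-subhomogeneous, which contradicts \Cref{thm:five-ray}.

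First consider the case where \(G=F\) is a face. Since \(C\) is polyhedral, \(F\) is a polyhedral cone. It is pointed because it sits inside the pointed cone \(C\), and it is full-dimensional in \(W=\operatorname{span}F\). Hence \(F\) is a proper polyhedral cone in the three-dimensional space \(W\), and \Cref{prop:faces} shows that \(F^{\min}\) is \(d\)-subhomogeneous. Taking an interior point of \(F\) as order unit, its dual base is a polygon whose vertex count equals the number of facets of \(F\). For a three-dimensional pointed cone, the number of facets equals the number of extreme rays, since the base polygon has equally many vertices and edges. So this polygon has at least five vertices, and \Cref{thm:five-ray} says \(F^{\min}\) is not subhomogeneous, a contradiction.

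Next consider the case where \(G=C/F\) is a face quotient. \Cref{prop:face-figures} gives that \(C/F\) is a proper polyhedral cone, here in a three-dimensional space, and that \({(C/F)}^{\min}\) is \(d\)-subhomogeneous. The same count of facets versus extreme rays, followed by \Cref{thm:five-ray}, gives the same contradiction. Hence no such \(d\) exists, and \(C^{\min}\) is not subhomogeneous.

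The only point needing any care is the bookkeeping that \Cref{thm:five-ray} applies as stated. Its hypothesis is phrased for a proper three-dimensional cone with at least five extreme rays, equivalently a dual polygon with at least five vertices. Subhomogeneity of \(G^{\min}\) is intrinsic to the matrix ordering and does not depend on the choice of interior order unit up to isomorphism. Also, the ambient space of \(G\) is exactly three-dimensional, so \(G\) is proper there. Once these points are recorded, no further work is required.
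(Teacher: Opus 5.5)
Your proposal is correct and is exactly the paper's argument: the contrapositive of \Cref{prop:faces} and \Cref{prop:face-figures}, combined with \Cref{thm:five-ray}. Your extra bookkeeping is accurate and only makes explicit what the paper leaves implicit. This covers properness of the face in its span, the equality of facet and ray counts for a three-dimensional cone, and the independence of subhomogeneity from the choice of interior order unit.
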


\begin{proof}
    This is the contrapositive of
    \Cref{prop:faces,prop:face-figures}, combined with \Cref{thm:five-ray}.
\end{proof}

Faces and quotients propagate negative results. For the constructions below
we also need a positive result: direct sums preserve the exact
subhomogeneity bound.

\begin{proposition}
\label{prop:permanence}
    Let \(C\) and \(D\) be proper polyhedral cones. Then
    \({(C\oplus D)}^{\min}\) is \(d\)-subhomogeneous if and only if both
    \(C^{\min}\) and \(D^{\min}\) are \(d\)-subhomogeneous.
\end{proposition}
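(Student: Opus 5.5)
The plan is to prove the two directions separately. The ``only if'' direction will come from the face results already established; the ``if'' direction will be an explicit realization built as a direct sum.

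For the ``only if'' direction, note that \(C\oplus 0\) and \(0\oplus D\) are faces of \(C\oplus D\). Each is exposed, by the functionals \(0\oplus\psi\) and \(\varphi\oplus 0\) with \(\psi\), \(\varphi\) interior points of the respective dual cones. The minimal system of the face \(C\oplus 0\) is just \(C^{\min}\), since the definition of \(C_s^{\min}\) involves only \(C\) and its span. \Cref{prop:faces} then gives that \(C^{\min}\) and \(D^{\min}\) are \(d\)-subhomogeneous whenever \({(C\oplus D)}^{\min}\) is. Alternatively, \(C\) is the face quotient \((C\oplus D)/D\), so \Cref{prop:face-figures} gives the same conclusion.

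For the ``if'' direction, I would pass to the dual side as in \Cref{prop:bounded-level-generation} and use minimal--maximal duality \cite[Theorem~4.1]{dannemuellernetzer}. We have \({(C\oplus D)}^\vee=C^\vee\oplus D^\vee\). The key step is to show
\[
    \bigl(C^\vee\oplus D^\vee\bigr)^{\max}_s=(C^\vee)^{\max}_s\oplus(D^\vee)^{\max}_s .
\]
A point \(Z\in\operatorname{Her}_s\otimes(V^*\oplus W^*)\) splits as \(Z=(X,Y)\), with \(X\) acting on \(V\) and \(Y\) acting on \(W\). Positivity on \(C\oplus D\) means \(X(c)+Y(e)\geqslant0\) for all \(c\in C\) and \(e\in D\). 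Taking \(e=0\) or \(c=0\) gives \(X\in(C^\vee)^{\max}_s\) and \(Y\in(D^\vee)^{\max}_s\). The converse inclusion is immediate, because a sum of two positive semidefinite matrices is positive semidefinite.

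Now write \(X=\sum_j V_j^*X_jV_j\) and \(Y=\sum_k W_k^*Y_kW_k\), with \(X_j\) and \(Y_k\) of size at most \(d\). Regard \(X_j\) as \((X_j,0)\) and \(Y_k\) as \((0,Y_k)\), both of which lie in \((C^\vee\oplus D^\vee)^{\max}\). Adding the two decompositions then decomposes \(Z\) using levels at most \(d\). Duality therefore yields \(d\)-subhomogeneity of \({(C\oplus D)}^{\min}\).

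As a cross-check on the primal side, one can take \(d\)-commutative realizations \(C^{\min}\hookrightarrow\operatorname{Mat}_d(\mathcal A)\) and \(D^{\min}\hookrightarrow\operatorname{Mat}_d(\mathcal B)\) and map \(V\oplus W\) into \(\operatorname{Mat}_d(\mathcal A\oplus\mathcal B)\). This, however, requires tracking order units, since \(u_C+u_D\) must be sent to the unit. For that reason I prefer the dual argument, which avoids units entirely because it uses the cone version of the duality.

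I expect the main obstacle to be purely bookkeeping. One must confirm that the duality criterion in \cite[Theorem~4.1]{dannemuellernetzer} is indeed stated for the unnormalized matrix cone \((C^\vee)^{\max}\), generated at levels \(\leqslant d\) by conic combinations \(\sum V_j^*X_jV_j\). If it is, then the zero-padded summands \((X_j,0)\) are legitimate generators and no normalization at \(u\) is needed. This is exactly the form used in the proof of \Cref{prop:face-figures}, so I will rely on it in the same way.
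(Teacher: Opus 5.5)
Your proof is correct, but it is organized differently from the paper's. The paper stays on the primal side. Using that $d$-subhomogeneity is equivalent to $d$-maximality, it derives both directions from the single identity $(C\oplus D)^{\min}_s=C^{\min}_s\oplus D^{\min}_s$, and handles the \emph{only if} part simply by compressing $(X_C,0)$.

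Your \emph{if} direction is the exact dual mirror of this. You split $(C^\vee\oplus D^\vee)^{\max}_s=(C^\vee)^{\max}_s\oplus(D^\vee)^{\max}_s$ and concatenate the zero-padded level-$\leqslant d$ decompositions. The bookkeeping point you flag is settled: this unnormalized generation criterion is exactly the form invoked in the proof of \Cref{prop:bounded-level-generation}.

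Your \emph{only if} direction instead reduces to \Cref{prop:faces}, via the face $C\oplus 0$, or to \Cref{prop:face-figures}, via $(C\oplus D)/(0\oplus D)\cong C$. This is legitimate, since those results are proved independently of the present one.

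The paper's route is self-contained and elementary. Yours spends the restriction lemma from \cite{fritznetzerthom} (or the dual face argument) on a direction that follows immediately from the splitting. In return, it shows that the converse is just a special case of face permanence, so the only new content is the \emph{if} direction.

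Your hesitation about the primal realization is unnecessary. The point $(u_C,u_D)$ is interior to $C\oplus D$ and is sent to the unit of $\operatorname{Mat}_d(\mathcal A\oplus\mathcal B)=\operatorname{Mat}_d(\mathcal A)\oplus\operatorname{Mat}_d(\mathcal B)$. The componentwise map is then a unital complete order embedding by the splitting of the minimal cones, which gives a third valid proof of the \emph{if} direction.
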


\begin{proof}
    At every matrix level,
    \[
        {(C\oplus D)}^{\min}_s=C_s^{\min}\oplus D_s^{\min}.       \tag{4}
    \]
    We again use that \(d\)-subhomogeneity is equivalent to
    \(d\)-maximality.

    So suppose first that \(C^{\min}\) and \(D^{\min}\) are \(d\)-maximal.
    Let \(X=(X_C,X_D)\) be of size \(s\), and assume that every compression
    of \(X\) to size \(d\) belongs to \({(C\oplus D)}^{\min}_d\). By
    \((4)\), every such compression of \(X_C\) belongs to \(C_d^{\min}\),
    and similarly for \(X_D\). The \(d\)-maximality of the two factors gives
    \(X_C\in C_s^{\min}\) and \(X_D\in D_s^{\min}\), so \((4)\) yields
    \(X\in{(C\oplus D)}^{\min}_s\). Thus the direct sum is \(d\)-maximal.

    Conversely, suppose that \({(C\oplus D)}^{\min}\) is \(d\)-maximal.
    If every size-\(d\) compression of \(X_C\) belongs to \(C_d^{\min}\),
    then every such compression of \((X_C,0)\) belongs to
    \({(C\oplus D)}^{\min}_d\). Hence \((X_C,0)\) belongs to
    \({(C\oplus D)}^{\min}_s\), and \((4)\) implies
    \(X_C\in C_s^{\min}\). Thus \(C^{\min}\) is \(d\)-maximal, and the
    same argument applies to \(D^{\min}\).
\end{proof}

The next family supplies basic nonsimplicial building blocks. Matrix
points over a product of simplices are pairs of positive operator-valued
measures. Their Arveson boundary points therefore consist of pairs of
projection-valued measures, equivalently, representations of a unital free
product of finite-dimensional commutative \(C^*\)-algebras. This connects the
matrix-convex boundary theory with the representation theory studied
in~\cite{dykematorres,exelloring}.

\begin{theorem}
\label{thm:product-simplices}
    For \(p,q\geqslant2\), let
    \[
        B_{p,q}=\Delta_{p-1}\times\Delta_{q-1}
    \]
    and let \(B_{p,q}^{\rm ph}\) be the maximal free polyhedron over
    \(B_{p,q}\). Explicitly,
    \[
    \begin{aligned}
        B_{p,q}^{\rm ph}(s)=\bigl\{(E_1,\ldots,E_p,F_1,\ldots,F_q)\mid\;&
        E_i,F_j\in\operatorname{Her}_s(\mathbb C),\quad E_i,F_j\geqslant0,\\
        &\sum_{i=1}^p E_i=I_s,\quad \sum_{j=1}^q F_j=I_s\bigr\}.
    \end{aligned}
    \]
    Its Arveson boundary points are precisely the pairs of
    projection-valued measures, that is, the points for which every \(E_i\)
    and every \(F_j\) is a projection.

    The following dichotomy holds:
    \begin{enumerate}
        \item If \(p=q=2\), then
        \[
            B_{2,2}^{\rm ph}
            =\operatorname{mconv}\bigl(B_{2,2}^{\rm ph}(2)\bigr),
        \]
        and every irreducible Arveson boundary point of
        \(B_{2,2}^{\rm ph}\) has size at most \(2\).
        \item If \(\max\{p,q\}\geqslant3\), then
        \(B_{p,q}^{\rm ph}\) has irreducible Arveson boundary points at
        unbounded matrix levels and is not generated by any fixed finite level.
    \end{enumerate}
\end{theorem}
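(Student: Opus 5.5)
The proof splits into three parts: characterize the Arveson boundary, treat $p=q=2$, and construct large irreducible boundary points when $\max\{p,q\}\geqslant3$.

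\textbf{Arveson boundary.} I will show that the Arveson boundary points are exactly the pairs of projection-valued measures.

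Suppose first that every $E_i$ and $F_j$ is a projection. Take a dilation $(\widetilde E_i,\widetilde F_j)\in B_{p,q}^{\rm ph}$ with off-diagonal blocks $\xi_i,\eta_j$. Both $\widetilde E_i$ and $I-\widetilde E_i$ are positive, since $I-\widetilde E_i$ is a sum of the other $\widetilde E_k$. Because a positive block matrix has off-diagonal blocks whose range lies in the range of the corresponding diagonal block, this gives $\operatorname{ran}\xi_i\subseteq\operatorname{ran}E_i\cap\operatorname{ran}(I-E_i)=0$. So $\xi_i=0$, and likewise $\eta_j=0$. This is the same argument as in \Cref{thm:five-ray}.

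Conversely, suppose some $E_i$ is not a projection. Take the Naimark dilation of the POVM $(E_1,\ldots,E_p)$ to a PVM $(\widehat E_i)$, chosen with nonzero off-diagonal part, which is possible because $E_i\neq E_i^2$. Pair it with $F_j\oplus F_j'$ for any $F_j'$ forming a POVM on the added space. This is a nontrivial dilation inside $B_{p,q}^{\rm ph}$, so the point is not in the Arveson boundary.

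\textbf{Case (1), $p=q=2$.} A boundary point is a pair of projections $E_1=P$, $F_1=Q$, with $E_2=I-P$ and $F_2=I-Q$. By Halmos' two-projection theorem, any pair of projections decomposes into irreducible pieces of size at most $2$. Hence every irreducible Arveson boundary point has size at most $2$. \Cref{prop:bounded-level-generation}, applied via the implication (3)$\Rightarrow$(2), then gives generation at level $2$. This is legitimate because $B_{2,2}$ is the base of a polyhedral cone, and \Cref{prop:bounded-level-generation} applies to such bases.

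\textbf{Case (2), $\max\{p,q\}\geqslant3$.} By symmetry, assume $p\geqslant3$. It suffices to treat $(p,q)=(3,2)$. Indeed, a PVM with $p$ outcomes can be padded by zero projections $E_4=\dots=E_p=0$ (and similarly $F_3=\dots=F_q=0$). Zero projections keep the point a PVM pair, hence Arveson by the first step, and they do not change the commutant, hence irreducibility is preserved.

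So I need, for each $n$, an irreducible triple of projections summing to the identity together with one further projection $Q$, at size $n$. The unital free product $\mathbb C^3*\mathbb C^2$ has irreducible representations of every dimension; this is the content of the representation theory in \cite{dykematorres,exelloring}, and I would aim to exhibit them explicitly.

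The construction I would try at size $n$ is the following:
\begin{itemize}
\item $E_1$: the projection onto $\operatorname{span}\{e_1\}$;
\item $E_2$: the projection onto $\operatorname{span}\{e_2,\ldots,e_k\}$;
\item $E_3=I-E_1-E_2$;
\item $Q$: the projection onto a generic subspace of dimension $\lfloor n/2\rfloor$.
\end{itemize}

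For irreducibility I would use a different, cleaner route than direct commutant computation. Choose $Q$ generic and $E_1,E_2,E_3$ with all of rank $\geqslant1$. Anything commuting with $E_1,E_2,E_3$ is block diagonal with respect to their ranges. The pair $(E_1+E_2,Q)$ decomposes into $2\times2$ Halmos blocks with distinct angles. The rank-one projection $E_1$ then couples all these blocks, exactly as $R=rr^*$ did in \Cref{thm:five-ray}. Concretely:
\begin{itemize}
\item take the $2m$-dimensional pair $P=\bigoplus P_j$, $Q=\bigoplus Q_j$ from \Cref{thm:five-ray}, with distinct $c_j$;
\item set $E_1=R=rr^*$, $E_2=P-R$, $E_3=I-P$, and $F_1=Q$, $F_2=I-Q$.
\end{itemize}
Here $R\leqslant P$ because $r\in\operatorname{ran}P$. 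An operator commuting with everything commutes with $P$ and $Q$, so it has the form $\bigoplus\lambda_jI_2$. Since $r$ has a nonzero component in every block, commuting with $R$ forces all $\lambda_j$ to be equal. This gives irreducible Arveson boundary points at every even level $2m$.

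Finally, \Cref{prop:bounded-level-generation}, via the implication (2)$\Rightarrow$(3), shows that $B_{p,q}^{\rm ph}$ is not generated by any fixed finite level. The main obstacle is verifying that the Halmos blocks are pairwise inequivalent, which requires the $c_j$ to be distinct, and that $E_1$ genuinely couples them; both are handled exactly as in \Cref{thm:five-ray}.
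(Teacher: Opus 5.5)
Your proof is correct but takes a different route from the paper's in two places.

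\textbf{Boundary description.} The paper cites the free-simplex boundary theorem of Evert--Helton--Klep--McCullough and transfers it to the product. You prove both directions directly. The range argument (as in \Cref{thm:five-ray}) shows that PVM pairs are Arveson. For a non-PVM point, a finite-dimensional Naimark dilation gives a nontrivial dilation. In fact any such dilation works, since its off-diagonal block satisfies $\xi_i\xi_i^*=E_i-E_i^2\neq0$.

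\textbf{Case $\max\{p,q\}\geqslant3$.} The paper argues abstractly. The algebra $A_{p,q}=\mathbb C^p*_{\mathbb C}\mathbb C^q$ is primitive (Dykema--Torres) and infinite-dimensional, hence not subhomogeneous. Residual finite-dimensionality (Exel--Loring) then forces finite-dimensional irreducible representations of unbounded size. You instead reduce to $(p,q)=(3,2)$ by padding with zero projections, which preserves both the PVM property and the commutant. You then exhibit explicit irreducible PVM pairs at every even level $2m$, reusing the mechanism of \Cref{thm:five-ray}: pairwise inequivalent Halmos blocks $(P_j,Q_j)$ coupled by the rank-one subprojection $R=rr^*\leqslant P$.

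\textbf{What each route buys.} Yours is elementary, self-contained and constructive. It shows that the obstruction for products of simplices is the same phenomenon as in the five-ray case. The paper's is shorter given the cited structure theory and identifies the $C^*$-algebraic reason, namely that $A_{p,q}$ itself is not subhomogeneous. It produces no explicit points and says nothing about which levels occur.

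Your even-level family is all that is needed. The preliminary generic-$Q$ sketch and the unproved remark that $\mathbb C^3*\mathbb C^2$ has irreducible representations of every dimension can be dropped.
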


\begin{proof}
    For a single simplex, the assertion that the Arveson boundary points are
    precisely the PVMs is the free-simplex boundary description
    in~\cite[Theorem~6.5]{evertheltonklepmccullough}. This implies the same
    statement for the product.

    Pairs of PVMs with \(p\) and \(q\) outcomes are the representations of
    \[
        A_{p,q}=\mathbb C^p*_{\mathbb C}\mathbb C^q,
    \]
    and the notions of irreducibility agree. The algebra \(A_{p,q}\) is
    residually finite-dimensional by~\cite{exelloring}.

    If \(p=q=2\), then \(A_{2,2}\) is generated by two projections, so every
    irreducible representation has dimension at most \(2\) by Halmos'
    two-projections theorem~\cite{halmos}. This proves the first alternative.

    If \(\max\{p,q\}\geqslant3\), the algebra \(A_{p,q}\) is primitive
    by~\cite{dykematorres}. Thus it has a faithful irreducible
    representation, which must be infinite dimensional, since \(A_{p,q}\) is
    infinite-dimensional. Hence \(A_{p,q}\) is not
    subhomogeneous. If its finite-dimensional irreducible representations
    had uniformly bounded size, residual finite-dimensionality would embed
    \(A_{p,q}\) into a product of matrix algebras of uniformly bounded size,
    forcing \(A_{p,q}\) itself to be subhomogeneous. Hence it has
    finite-dimensional irreducible representations of unbounded size.
\end{proof}

We now apply this to cones with exactly one more extreme ray than
their dimension. Such a cone has a unique linear dependence among its ray
generators. The positive and negative parts of that dependence determine a
product of two simplices, while the generators with zero coefficient split
off simplicial factors.

Let \(C_{\square}\) denote a three-dimensional polyhedral convex cone with
four extreme rays. Since they are all isomorphic, the explicit choice does
not matter.

\begin{corollary}
\label{cor:n-plus-one}
    Let \(n\geqslant3\), and let \(C\) be an \(n\)-dimensional proper
    polyhedral cone with \(n+1\) extreme rays. Then \(C^{\min}\) is
    subhomogeneous if and only if
    \[
        C\cong C_{\square}\oplus\mathbb R_+^{n-3}.
    \]
    In this case \(C^{\min}\) is \(2\)-subhomogeneous.

    Consequently, every three-dimensional proper polyhedral cone with four extreme rays
    has a \(2\)-subhomogeneous minimal system, whereas for every
    \(n\geqslant4\) there is an \(n\)-dimensional proper polyhedral cone with
    \(n+1\) extreme rays whose minimal system is not subhomogeneous.
\end{corollary}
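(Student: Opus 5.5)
Since \(C\) has \(n+1\) extreme rays with generators \(v_1,\ldots,v_{n+1}\) spanning an \(n\)-dimensional space, there is a unique (up to scaling) linear dependence
\[
    \sum_{i\in I_+}\lambda_i v_i=\sum_{j\in I_-}\mu_j v_j,
    \qquad \lambda_i,\mu_j>0,
\]
with \(I_+,I_-,I_0\) a partition of the index set and \(I_0\) the indices with zero coefficient. Pointedness forces both \(I_+\) and \(I_-\) to be nonempty. Since each \(v_k\) is extreme, no generator is a positive combination of the others, so \(|I_+|,|I_-|\geqslant2\). Put \(p=|I_+|\) and \(q=|I_-|\).

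The first step is structural. The vectors \(v_k\), \(k\in I_0\), are linearly independent modulo \(W=\operatorname{span}\{v_i\mid i\in I_+\cup I_-\}\), which has dimension \(p+q-1\). Hence \(V=W\oplus\operatorname{span}\{v_k\mid k\in I_0\}\), and
\[
    C=C_W\oplus\mathbb R_+^{|I_0|},
    \qquad C_W=\operatorname{cone}\{v_i\mid i\in I_+\cup I_-\}.
\]
By \Cref{prop:permanence} together with the \(1\)-subhomogeneity of \(\mathbb R_+\), \(C^{\min}\) is \(d\)-subhomogeneous exactly when \(C_W^{\min}\) is, for \(d\geqslant1\). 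Next I identify the dual base of \(C_W\). A functional \(\varphi\in C_W^\vee\) is determined by the values \(x_i=\varphi(v_i)\geqslant0\), subject only to \(\sum\lambda_ix_i=\sum\mu_jx_j\). Taking \(u\) so that this common value is normalized to \(1\), the base is
\[
    \Bigl\{(x,y)\in\mathbb R_+^{p}\times\mathbb R_+^{q}\Bigm|
    \sum\lambda_ix_i=1=\sum\mu_jy_j\Bigr\},
\]
which is, after rescaling coordinates, \(\Delta_{p-1}\times\Delta_{q-1}\). The same description holds at matrix level: \(X\in B^{\rm ph}(s)\) corresponds to positive semidefinite \(E_i=\lambda_iX(v_i)\) and \(F_j=\mu_jX(v_j)\) with \(\sum E_i=I=\sum F_j\), because the matrix-level constraints are exactly \(X(v_k)\geqslant0\) together with the linear relation. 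Thus \(B^{\rm ph}\cong B_{p,q}^{\rm ph}\). The one point to check is that the choice of unit \(u\) matters only up to the projective/normalization equivalence already used in \Cref{prop:bounded-level-generation}; I expect this to be harmless, since that proposition uses an arbitrary interior \(u\).

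Now apply \Cref{thm:product-simplices} together with \Cref{prop:bounded-level-generation}. If \(p=q=2\), then \(C_W\) is three-dimensional with four rays, so \(C_W\cong C_\square\), and it is \(2\)-subhomogeneous. It is not \(1\)-subhomogeneous, since \(B_{2,2}^{\rm ph}\) has irreducible boundary points of size \(2\), namely two projections in generic position. In that case \(|I_0|=n+1-4=n-3\). If instead \(\max\{p,q\}\geqslant3\), the free polyhedron has irreducible Arveson boundary points of unbounded size, so \(C^{\min}\) is not subhomogeneous. For the converse direction, note that \(C\cong C_\square\oplus\mathbb R_+^{n-3}\) forces \(p=q=2\) by uniqueness of the dependence, and in any case \Cref{prop:permanence} applies directly.

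The consequences follow immediately. Taking \(n=3\), every four-ray cone has \(p=q=2\). For \(n\geqslant4\), take \(p=3\), \(q=2\), \(|I_0|=n-4\); for example, the cone over \(\Delta_2\times\Delta_1\) (a prism) summed with \(\mathbb R_+^{n-4}\). The main obstacle is carefully justifying the splitting \(C=C_W\oplus\mathbb R_+^{|I_0|}\) and the identification of \(C_W\)'s dual base with the product of simplices, including the matrix levels. Everything else is bookkeeping with earlier results.
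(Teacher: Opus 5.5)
Your proposal is correct and follows the paper's proof essentially step for step: the unique dependence with both sign classes of size at least two, splitting off \(\mathbb R_+^{|I_0|}\), identifying the free polyhedron of \(C_W\) with pairs of POVMs over \(\Delta_{p-1}\times\Delta_{q-1}\), and concluding via \Cref{thm:product-simplices}, \Cref{prop:bounded-level-generation} and \Cref{prop:permanence}. One small slip is in the \(n\geqslant4\) example: the five-ray cone with \(p=3\), \(q=2\) is the one whose \emph{dual} base is the prism \(\Delta_2\times\Delta_1\) (its own base is the triangular bipyramid), because the cone whose base is the prism itself has six extreme rays.
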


\begin{proof}
    Choose extreme-ray generators \(v_1,\ldots,v_{n+1}\). Up to a nonzero
    scalar, they satisfy a unique relation
    \[
        \sum_{i=1}^{n+1}\alpha_i v_i=0.
    \]
    Write
    \[
        P=\{i\mid\alpha_i>0\},\qquad
        N=\{i\mid\alpha_i<0\},\qquad
        Z=\{i\mid\alpha_i=0\},
    \]
    and put \(p=|P|\), \(q=|N|\), and \(z=|Z|\). Since \(C\) is pointed,
    the relation has coefficients of both signs. Neither \(P\) nor \(N\)
    can be a singleton, since then the corresponding ray generator would be
    a positive linear combination of the generators on the other side,
    contradicting extremality. Thus \(p,q\geqslant2\).

    Let \(C_0\) be the cone generated
    by the rays indexed by \(P\cup N\). The rays indexed by \(Z\) are linearly
    independent modulo \(\operatorname{span}(C_0)\), since otherwise there
    would be a second relation among all the ray generators. Hence
    \[
        C\cong C_0\oplus\mathbb R_+^z.                       \tag{5}
    \]

    After rescaling the generators in the unique relation, the maximal free
    polyhedron associated with \(C_0\) is the one over
    \(\Delta_{p-1}\times\Delta_{q-1}\): its points are precisely pairs of
    POVMs \((E_1,\ldots,E_p)\) and \((F_1,\ldots,F_q)\). By
    \Cref{thm:product-simplices}, \Cref{prop:bounded-level-generation} and \Cref{prop:permanence},
    the minimal system over \(C\) is subhomogeneous exactly when \(p=q=2\).
    In that case \(z=n-3\) and \(C_0\cong C_{\square}\), which gives the
    displayed decomposition and shows that the system is
    \(2\)-subhomogeneous.

    For \(n=3\), necessarily \(p=q=2\) and \(z=0\), proving the first
    consequence. For \(n\geqslant4\), take the four-dimensional five-ray cone
    associated with \(\Delta_1\times\Delta_2\) and form its direct sum with
    \(\mathbb R_+^{n-4}\). This cone has dimension \(n\), exactly
    \(5+(n-4)=n+1\) extreme rays, and a non-subhomogeneous minimal system by
    \Cref{thm:product-simplices} and \Cref{prop:permanence}.
\end{proof}

Finally, the preceding ingredients can be combined to compare the two
possible behaviors at fixed dimension and depending on the number of rays.

\begin{theorem}
\label{thm:ray-counts}
Let \(n\geqslant1\).
\begin{enumerate}
    \item For every integer
    \[
        n\leqslant r\leqslant n+\lfloor n/3\rfloor,
    \]
    there is an \(n\)-dimensional proper polyhedral cone with exactly
    \(r\) extreme rays whose minimal operator system is
    \(2\)-subhomogeneous. When \(r=n\), it is in fact
    \(1\)-subhomogeneous.
    \item Non-subhomogeneous examples with exactly \(r\) extreme rays
    exist for
    \[
    \begin{cases}
        n=3,\quad r\geqslant5,\\
        n\geqslant4,\quad r\geqslant n+1.
    \end{cases}
    \]
    \end{enumerate}
    Consequently, whenever
    \[
        n\geqslant4,
        \qquad
        n+1\leqslant r\leqslant n+\lfloor n/3\rfloor,
    \]
    there are \(n\)-dimensional polyhedral cones with exactly \(r\) extreme rays
    exhibiting both behaviors.
\end{theorem}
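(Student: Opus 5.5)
The plan is to build everything from direct sums. \Cref{prop:permanence} controls the subhomogeneity degree of a sum, and extreme rays and dimensions add under \(\oplus\).

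\emph{Part (1).} Write \(r=n+k\) with \(0\leqslant k\leqslant\lfloor n/3\rfloor\). Take
\[
    C=C_{\square}^{\oplus k}\oplus\mathbb R_+^{\,n-3k}.
\]
Since \(3k\leqslant n\), the exponent \(n-3k\) is nonnegative. This cone is proper and polyhedral, of dimension \(3k+(n-3k)=n\). Its extreme rays are the union of the extreme rays of the summands, so there are \(4k+(n-3k)=n+k=r\) of them. Each \(C_{\square}^{\min}\) is \(2\)-subhomogeneous by \Cref{cor:n-plus-one}, with \(n=3\). Each copy of \(\mathbb R_+\) is simplicial, hence \(1\)-subhomogeneous and a fortiori \(2\)-subhomogeneous. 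Iterating \Cref{prop:permanence} then shows that \(C^{\min}\) is \(2\)-subhomogeneous. When \(r=n\) we have \(k=0\), so \(C=\mathbb R_+^n\), and \Cref{prop:permanence} with \(d=1\) gives \(1\)-subhomogeneity. The only small point to justify is that the extreme rays of \(C\oplus D\) are exactly \((R,0)\) and \((0,R')\) for extreme rays \(R\) of \(C\) and \(R'\) of \(D\). This is standard, since a face of a direct sum is a direct sum of faces.

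\emph{Part (2).} For \(n=3\) and \(r\geqslant5\), take the cone over any convex polygon with \(r\) vertices. By \Cref{thm:five-ray} its minimal system is not subhomogeneous. For \(n\geqslant4\), the case \(r=n+1\) is handled by \Cref{cor:n-plus-one}. For \(r\geqslant n+2\), I would take \(D_r\oplus\mathbb R_+^{\,n-3}\), where \(D_r\) is the three-dimensional cone over a polygon with \(r-(n-3)\) vertices. This count is at least \(5\) because \(r\geqslant n+2\). The resulting cone has dimension \(n\) and exactly \(r\) rays.

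For non-subhomogeneity there are two routes. The first is the \enquote{only if} direction of \Cref{prop:permanence}: if the sum were \(d\)-subhomogeneous, then so would be \(D_r^{\min}\), contradicting \Cref{thm:five-ray}. The second is \Cref{cor:face-obstructions}, since \(D_r\oplus 0\) is a three-dimensional face with at least five extreme rays. This construction in fact also covers \(r=n+1\) when \(n\geqslant4\), taking \(D\) a pentagon cone and \(\mathbb R_+^{n-4}\), but then the dimension is \(3+(n-4)=n-1\). That miscount is exactly why the case \(r=n+1\) has to come from \Cref{cor:n-plus-one} instead, via \(\Delta_1\times\Delta_2\).

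\emph{Consequence.} This is immediate by intersecting the ranges in (1) and (2). I expect no real obstacle here. The only care needed is the bookkeeping of dimensions and ray counts, and checking that each summand is proper, so that the sum is proper.
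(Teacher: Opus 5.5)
Your proposal is correct and essentially follows the paper's proof. You use the same cones \(C_{\square}^{\oplus k}\oplus\mathbb R_+^{n-3k}\) for (1), and the same three cases for (2): \(n=3\) via \Cref{thm:five-ray}, \(r=n+1\) via \Cref{cor:n-plus-one}, and \(r\geqslant n+2\) via a polygon cone with \(r-n+3\geqslant5\) vertices summed with \(\mathbb R_+^{n-3}\), with \Cref{prop:permanence} transferring the obstruction. The only blemish is your aside about a pentagon cone plus \(\mathbb R_+^{n-4}\): it first claims to cover \(r=n+1\) and then correctly notes the dimension is \(n-1\), so you should simply delete it, since you (like the paper) rightly take \(r=n+1\) from \Cref{cor:n-plus-one}.
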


\begin{proof}
    For (1), put \(k=r-n\) and set
    \[
        S_{n,r}=C_{\square}^{\oplus k}
        \oplus\mathbb R_+^{n-3k}.
    \]
    The assumption \(k\leqslant\lfloor n/3\rfloor\) makes this well defined, and
    \[
        \dim S_{n,r}=3k+(n-3k)=n,
        \qquad
        \#\operatorname{ExtRays}(S_{n,r})
        =4k+(n-3k)=n+k=r.
    \]
    If \(k=0\), the cone is simplicial. If \(k\geqslant1\), its minimal
    system is \(2\)-subhomogeneous by \Cref{prop:permanence}.

    For (2), the case \(n=3\) follows from \Cref{thm:five-ray}. Now let
    \(n\geqslant4\). When \(r=n+1\), the result follows from \Cref{cor:n-plus-one}.
    Finally, suppose \(r\geqslant n+2\) and put
    \[
        q=r-n+3\geqslant5.
    \]
    Choose any three-dimensional proper cone \(C^{(q)}\) with \(q\) extreme
    rays and set
    \[
        C_{n,r}=C^{(q)}\oplus\mathbb R_+^{n-3}.
    \]
    Then
    \[
        \dim C_{n,r}=n,
        \qquad
        \#\operatorname{ExtRays}(C_{n,r})=q+n-3=r.
    \]
    By \Cref{thm:five-ray}, \({(C^{(q)})}^{\min}\) is not subhomogeneous, and
    \Cref{prop:permanence} passes this obstruction to \(C_{n,r}^{\min}\).
\end{proof}

The preceding constructions provide
\(n+\lfloor n/3\rfloor\) rays by taking direct sums of square cones and
simplicial factors. By a result from~\cite{wernerwotzlaw}, the same
building blocks are forced under a specific incidence hypothesis.

\begin{theorem}
\label{thm:facet-complements}
    Let \(C\) be an \(n\)-dimensional proper polyhedral cone, and suppose
    that every facet of \(C\) omits at most two extreme rays. Then the
    following are equivalent:
    \begin{enumerate}
        \item \(C^{\min}\) is subhomogeneous;
        \item there are integers \(a,m\geqslant0\) such that
        \[
            C\cong\mathbb R_+^a\oplus C_{\square}^{\oplus m}.
        \]
    \end{enumerate}
    In this case \(C^{\min}\) is \(2\)-subhomogeneous and
    \[
        \#\operatorname{ExtRays}(C)
        =n+m\leqslant n+\lfloor n/3\rfloor.
    \]
\end{theorem}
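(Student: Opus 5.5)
The direction (2)\(\Rightarrow\)(1) is immediate. The cone \(\mathbb R_+^a\) is simplicial, hence \(1\)-subhomogeneous and in particular \(2\)-subhomogeneous. Each copy of \(C_{\square}\) is \(2\)-subhomogeneous by \Cref{cor:n-plus-one}. Applying \Cref{prop:permanence} repeatedly then shows that the direct sum is \(2\)-subhomogeneous. Counting dimensions and rays gives \(n=a+3m\) and \(\#\operatorname{ExtRays}(C)=a+4m=n+m\). Since \(3m\leqslant n\), we get \(m\leqslant\lfloor n/3\rfloor\), which proves the final assertion.

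For (1)\(\Rightarrow\)(2), I would dualize the incidence hypothesis into polytope language and invoke the structure result from~\cite{wernerwotzlaw}. Pass to a base polytope \(P\) of \(C\) (a cross-section), which has dimension \(n-1\). Its vertices are the extreme rays and its facets are the facets of \(C\). The hypothesis says that every facet of \(P\) misses at most two vertices. I expect the cited combinatorial theorem to say the following: a polytope in which every facet misses at most two vertices is, combinatorially and in fact projectively, a free sum/join of simplices, segments-squared pieces (quadrilaterals), and possibly pentagons. Equivalently, in cone language, \(C\) is a direct sum of simplicial cones, four-ray three-dimensional cones, and three-dimensional five-ray cones. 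I would also keep in mind whether the theorem allows products such as \(\Delta_1\times\Delta_2\) as factors; that is, whether it gives a classification up to combinatorial type only. In that case, I would still need to realize the direct sum decomposition linearly. This should follow because a combinatorial join decomposition of a cone, where the facets are the products of facets of the factors, forces the ray generators of the factors to span complementary subspaces (compare the argument for \((5)\) in \Cref{cor:n-plus-one}).

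Given such a decomposition \(C\cong\bigoplus_i C_i\) into irreducible summands, \Cref{prop:permanence} shows that each \(C_i^{\min}\) is subhomogeneous. Any summand that is neither simplicial nor \(C_{\square}\) must be excluded. If a summand is a three-dimensional cone with at least five rays, \Cref{thm:five-ray} excludes it. If a summand is a cone with one more ray than its dimension other than \(C_{\square}\), \Cref{cor:n-plus-one} excludes it. For any other possible summand type, I would find a three-dimensional face or face quotient with at least five rays and apply \Cref{cor:face-obstructions}. Alternatively, I would locate a face or quotient of the form \(\Delta_{p-1}\times\Delta_{q-1}\) with \(\max\{p,q\}\geqslant3\), which is non-subhomogeneous by \Cref{thm:product-simplices} together with \Cref{prop:faces} and \Cref{prop:face-figures}.

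The main obstacle is the second step: correctly translating the statement of~\cite{wernerwotzlaw} into a linear direct-sum decomposition of \(C\) and enumerating exactly which indecomposable pieces can occur. Once the list of pieces is fixed, each non-square, non-simplicial piece has to be ruled out by one of the obstructions already established in the paper.
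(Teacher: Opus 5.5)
Your plan is essentially the paper's proof: decompose the base via Lemma~3.8 of Werner--Wotzlaw, read the join/pyramid structure as a linear direct sum \(C\cong\mathbb R_+^a\oplus D_1\oplus\cdots\oplus D_m\) (your dimension-count remark is what justifies this), and use \Cref{prop:permanence} together with \Cref{cor:n-plus-one} to exclude every non-square summand. The one correction concerns your guessed list of pieces: the lemma gives free sums \(\Delta_j\oplus\Delta_k\) with \(j,k\geqslant1\), and pentagons cannot occur, since each edge of a pentagon omits three vertices. The cones over these pieces have one more extreme ray than their dimension and a full-support dependence, so your \Cref{cor:n-plus-one} case alone forces \(j=k=1\), and the five-ray and face-obstruction fallbacks are never needed.
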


\begin{proof}
    Let \(P\) be a compact base of \(C\). The hypothesis says that every
    facet of \(P\) omits at most two vertices.
    By~\cite[Lemma~3.8]{wernerwotzlaw}, \(P\) is an iterated pyramid over a
    join of direct sums of two simplices: more precisely, it has the form
    \[
        \Delta_{a-1}*
        (\Delta_{j_1}\oplus\Delta_{k_1})*\cdots *
        (\Delta_{j_m}\oplus\Delta_{k_m}),
        \qquad j_i,k_i\geqslant1.
    \]
    The factors occur as faces of \(P\). Taking their homogenized cones
    turns the join into a direct sum, and therefore
    \[
        C\cong\mathbb R_+^a\oplus D_1\oplus\cdots\oplus D_m,
    \]
    where \(D_i\) has dimension \(j_i+k_i+1\), has one more extreme ray
    than its dimension, and its unique linear dependence has no zero
    coefficient.

    By \Cref{prop:permanence}, \(C^{\min}\) is subhomogeneous if and only
    if every \(D_i^{\min}\) is subhomogeneous. Applying
    \Cref{cor:n-plus-one} to \(D_i\), and using the fact that its unique
    dependence has full support, shows that this happens precisely when
    \(j_i=k_i=1\). In that case \(D_i\cong C_{\square}\). This proves the
    equivalence and the \(2\)-subhomogeneity assertion.

    Finally, the displayed decomposition gives
    \[
        n=a+3m,
        \qquad
        \#\operatorname{ExtRays}(C)=a+4m=n+m,
    \]
    and hence \(m\leqslant\lfloor n/3\rfloor\).
\end{proof}

\section{Open problems}

The preceding results suggest that the examples constructed from simplicial
cones and three-dimensional four-ray cones may in fact exhaust the
subhomogeneous case. We therefore propose the following claim:

\begin{equation}
\label{eq:classification-conjecture}
    C^{\min}\text{ is subhomogeneous}
    \quad\Longleftrightarrow\quad
    C\cong\mathbb R_+^a\oplus C_{\square}^{\oplus m}
    \text{ for some }a,m\geqslant0.                         \tag{6}
\end{equation}
The result is proved in \Cref{thm:facet-complements} under the
additional assumption that every facet omits at most two extreme rays.

A proof of~\eqref{eq:classification-conjecture} would also settle two more conjectures motivated the results above. First, every subhomogeneous minimal
operator system over a polyhedral cone would already be
\(2\)-subhomogeneous. Second, the optimal universal threshold for the number
of extreme rays would be
\[
    r(n)=n+\lfloor n/3\rfloor+1
    \qquad(n\geqslant3).
\]
Indeed, \Cref{thm:ray-counts} provides subhomogeneous cones with
\(n+\lfloor n/3\rfloor\) extreme rays,
whereas~\eqref{eq:classification-conjecture} would imply that no subhomogeneous
\(n\)-dimensional cone can have more.

\end{document}